\title{A Note on the Subcubes of the n-Cube}
\author{Hans Ulrich Simon}
\newtheorem{theorem}{Theorem}[section]
\newtheorem{definition}[theorem]{Definition}
\newtheorem{lemma}[theorem]{Lemma}
\newtheorem{corollary}[theorem]{Corollary}
\newtheorem{example}[theorem]{Example}
\newcommand{\eset}{\emptyset}
\newcommand{\seq}{\subseteq}
\newcommand{\ra}{\rightarrow}
\newcommand{\nats}{\mathbbm{N}}
\newcommand{\reals}{\mathbbm{R}}
\DeclareMathOperator{\bin}{bin}
\begin{document}

\maketitle

\begin{abstract}
In the year 1990, B\'ela Bollob\'as, Imre Leader and Andrew Radcliffe 
considered the following combinatorial problem: given three 
parameters $k$, $n$ and $q$, find a set of $k$ vertices in the binary
$n$-cube which contains a maximal number of $q$-dimensional subcubes.\footnote{
They also discussed the analogous problem for the grid $[\ell]^n$
at place of the binary $n$-cube.}
It was shown that an optimal solution is given by the $k$ vertices 
which coincide with the binary representations of the number $0,1,\ldots,k-1$.
Two proofs were presented. The proof given in~\cite{BL1990} is particularly
elegant and short. Here we show that also the other proof, the one
given in~\cite{BR1990}, becomes quite simple and short when 
it is combined with a lemma from Graham whose publication dates back 
to 1970~\cite{G1970}. As a second application of Graham's lemma, 
we solve a recursive equation (related to the optimization problem
that we discussed before) that might be considered interesting in 
its own right.
\end{abstract}

\paragraph{Structure of the Paper.}

In Section~\ref{sec:definitions}, we recall some basic definitions and 
fix some notation. The problem, we are mainly concerned with, is
formally described in Section~\ref{sec:main-problem}. 
The set $S^*_{k,n}$ consisting of the $k$ vertices in the $n$-cube
which coincide with the binary representations of $0,1,\ldots,k-1$
is known to be an optimal solution for this problem~\cite{BR1990,BL1990}.
In Section~\ref{sec:graham-based-proof}, we show that Graham's lemma
from~\cite{G1970} can be used to simplify and shorten the proof that 
was given in~\cite{BR1990}. The final Section~\ref{sec:subcubes-recursion}
is devoted to the analysis of a recursive equation which is closely
related to the optimization problem that we considered before.

\section{Definitions and Notations} \label{sec:definitions}

For each pair $a,b$ of non-negative integers, 
we set $[a:b] = \{a,a+1,\ldots,b\}$, where $[a:b] = \eset$ if $a>b$.
Instead of $[1:b]$, we simply write $[b]$. Sets of the form $[a:b]$ 
will be called \emph{intervals}.

As usual, the $n$-cube is the graph with vertex set $B_n = \{0,1\}^n$ 
and an edge between vertices $u$ and $v$ iff $u$ and $v$ differ 
in exactly one bit. We denote by $\bin(i) \in B_n$ with $i \in [0 : 2^n-1]$ 
the binary representation of $i$ (with leading zeros so as to
make it an $n$-bit representation). Throughout the paper,
we denote by $h(i)$ the number of ones in $\bin(i)$ and define 
the function $h_q$ by setting $h_q(i) = \binom{h(i)}{q}$. 

\noindent
For each $r \in [0:n-1]$, the $n$-cube $B_n$ decomposes into
the $(n-1)$-cubes 
\[
B_n(r,0) = \{x \in B_n: x_r=0\}\ \mbox{ and }\ B_n(r,1) = \{x \in B_n: x_r=1\}
\enspace .
\]
More generally, a set $S \seq B_n$ decomposes into
\[
S(r,0) = \{x \in S: x_r=0\}\ \mbox{ and }\ S(r,1) = \{x \in S: x_r=1\}
\enspace .
\]
A \emph{$q$-dimensional subcube of $B_n$} is a subset 
of $B_n$ that is of the form
\[ 
\{x \in B_n: \mbox{$x_i = b(i)$ for all $i \in Q$}\} 
\enspace , 
\]
where $Q \seq [0:n-1]$ is a set of size $n-q$ and $b: Q \ra \{0,1\}$ 
is a function which assigns a bit pattern to the elements of $Q$.
Note that $B_n$ is an $n$-dimensional subcube of itself. $B_n(r,0)$ 
and $B_n(r,1)$ are $(n-1)$-dimensional subcubes of $B_n$. The edges 
(resp.~the vertices) in $B_n$ are the $1$-dimensional 
(resp.~$0$-dimensional) subcubes.

Throughout the paper, we will view the vertices of $B_n$ as
binary representations of the numbers from $0$ to $2^n-1$.
For this reason, we think of the components of $x \in B_n$
as being indexed from $n-1$ downto $0$, 
i.e., $x = (x_{n-1},\ldots,x_1,x_0)$ and $\sum_{i=0}^{n-1}x_i2^i$ 
is the number represented by $x$.

\section{Maximal Number of Subcubes Contained in k-Sets}
\label{sec:main-problem}

Here is the formal definition of the problem we are considering:

\begin{description}
\item[Given] the parameters $n \ge1$, $0 \le q \le n$ 
and $1 \le k \le 2^n$,
\item[find] a set $S \seq B_n$ of size $k$ which contains
a maximal number of $q$-dimensional subcubes.
\end{description}
We refer to this problem as problem $P_q(k,n)$.
The number of $q$-dimensional subcubes contained in $S$ is
denoted by $m_q(S)$. The number of $q$-dimensional subcubes
contained in an optimal solution for problem $P_q(k,n)$ is
denoted $m_q(k,n)$, 
i.e., $m_q(k,n) = \max \{m_q(S): S \seq B_n , |S|=k\}$.

\begin{example}[Trivial Observations]
The problem $P_q(k,n)$ is trivial if $k < 2^q$, if $q=0$, 
or if $k \in \{1,2^n\}$. If $k < 2^q$, then $m_q(k,n) = 0$ 
because no set of a size less than $2^q$ can possibly contain 
a $q$-dimensional subcube. The $0$-dimensional subcubes are
the vertices of the $n$-cube, i.e., the elements of $B_n$. 
Hence any set $S \seq B_n$ of size $k$ contains $k$ $0$-dimensional 
subcubes so that $m_0(k,n) = k$. For obvious reasons, 
setting $k=1$ or $k = 2^n$ also trivializes the problem.
\end{example}

The smallest value of $q$ leading to nontrivial problems $P_q(k,n)$
is the value $1$. $P_1(k,n)$ is the problem of finding 
a set $S \seq B_n$ of size $k$ which contains the largest possible
number of edges (= $1$-dimensional subcubes) of the $n$-cube.
This problem was posed and solved by Sergiu Hart~\cite{H1976}.
He showed that the set $S^*_{k,n} := \{\bin{0},\bin(1),\ldots,\bin(k-1)\}$ 
is an optimal solution of the problem $P_1(k,n)$.\footnote{The main
result in~\cite{H1976} is somewhat more general. Hart defines
so-called ``good $k$-subsets'' of $B_n$ and shows that any good
set is an optimal solution.} Note that the set $S^*_{k,n}$ has 
a dependence on $n$ (albeit a weak one), because the elements
of $S^*_{k,n}$ are elements in $B_n$ ($n$-bit vectors).

As already observed in~\cite{BR1990}, the number of $q$-dimensional
subcubes contained in $S^*_{k,n}$ is easy to determine:
\[
m_q(S^*_{k,n}) = \sum_{j=0}^{k-1}h_q(j) \enspace .
\]
As shown in~\cite{BR1990,BL1990}, no set $S \seq B_n$ of size $k$
can contain more than $m_q(S^*_{k,n})$ many $q$-dimensional subcubes.
In other words: the same set $S^*_{k,n}$ that is the optimal solution
of the problem $P_1(k,n)$ is also an optimal solution of 
the problem $P_q(k,n)$ regardless of how $q$ is chosen. The proof 
given in~\cite{BL1990} is particularly elegant and short. The authors 
define so-called ``$i$-compressions'' ($i=0,1,\ldots,n-1$) which operate 
on sets $S \seq B_n$ of size $k$. It is shown inductively that 
an $i$-compression of $S$ does not contain less $q$-dimensional subcubes 
than $S$ itself. Hence an optimal choice can always be made among 
the sets $S$ which are fully compressed in the sense that each 
$i$-compression maps $S$ to itself. The definition of $i$-compressions 
in~\cite{BL1990} implies that the set $S_{k,n}^*$ is fully compressed.
Moreover, as also shown in~\cite{BL1990}, $S_{k,n}^*$ is the best
choice among all fully compressed sets and therefore an optimal
solution for $P_q(k,n)$.  For details, see~\cite{BL1990}.

The proof given in~\cite{BR1990} is comparably lengthy.
But, as we will show in the next section, this proof can be
considerably simplified when another result, Graham's lemma
from~\cite{G1970}, is brought into play.

\section{A Proof Based on Special Bijections}
\label{sec:graham-based-proof}


\begin{definition}[Special Bijections]
\label{def:special-bijection} 
Let $I = [i_0 : i_0+s-1]$ and $J = [j_0 : j_0+s-1]$ with $j_0 > i_0$ 
be two sub-intervals of $[0 : 2^n-1]$, each of size $s$.
A bijection $P:I \ra J$ is said to be \emph{special} if $h(i) \le h(P(i))$
for each $i \in I$ and, in addition,  all these inequalities are strict in 
case that $I$ and $J$ are non-overlapping (i.e., in case that $i_0+s-1 < j_0$).
\end{definition}

\noindent
We will make use of the following result (dating back to the year 1970):

\begin{lemma}[\cite{G1970,JT1999}] \label{lem:graham}
Let $I = [i_0 : i_0+s-1]$ and $J = [j_0 : j_0+s-1]$ with $j_0 > i_0$
be two sub-intervals of $[0 : 2^n-1]$, each of size $s$. If $i_0=0$,
then there exists a special bijection $P:I \ra J$.
\end{lemma}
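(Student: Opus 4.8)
The plan is to recast the existence of a special bijection $P\colon I\to J$ as a combinatorial domination condition, and then to verify that condition using the self-similar structure of the weight function $h$. Since a special bijection is required only to satisfy $h(i)\le h(P(i))$ (with strict inequality in the disjoint case), the sole relevant data are the multisets of weights $\{h(i):i\in I\}$ and $\{h(j):j\in J\}$. I would first invoke the standard majorization fact: if $u_1\le\cdots\le u_s$ and $v_1\le\cdots\le v_s$ are the two weight multisets written in nondecreasing order, then a bijection with $u\le v$ pointwise exists if and only if $u_\ell\le v_\ell$ for every $\ell$, equivalently if and only if for every threshold $t$ one has $|\{i\in I:h(i)\ge t\}|\le|\{j\in J:h(j)\ge t\}|$. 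The strict variant, needed when $I$ and $J$ are non-overlapping, follows by applying the same fact to the shifted multiset $\{h(i)+1:i\in I\}$; that is, it demands $|\{i\in I:h(i)\ge t\}|\le|\{j\in J:h(j)\ge t+1\}|$ for all $t$.

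To make these conditions computable, I would introduce the counting function $A_t(m)=|\{i:0\le i<m,\ h(i)\ge t\}|$, so that the number of weight-$\ge t$ elements of an interval $[a:b]$ equals $A_t(b+1)-A_t(a)$. Because $I=[0:s-1]$ begins at $0$, the left-hand counts collapse to $A_t(s)$, while $J=[j_0:j_0+s-1]$ contributes $A_t(j_0+s)-A_t(j_0)$. Thus the overlapping case reduces to the single family $A_t(j_0)+A_t(s)\le A_t(j_0+s)$, i.e.\ to the \textbf{superadditivity} of $A_t$, and the disjoint case ($j_0\ge s$) reduces to the shifted family $A_{t+1}(j_0)+A_t(s)\le A_{t+1}(j_0+s)$ with $j_0\ge s$. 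The engine driving both is the doubling recursion, immediate from $h(2m)=h(m)$ and $h(2m+1)=h(m)+1$:
\[
A_t(2m)=A_t(m)+A_{t-1}(m),\qquad A_t(2m+1)=A_t(m)+A_{t-1}(m)+[h(m)\ge t].
\]

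For superadditivity I would argue by induction on $x+y$, proving $A_t(x+y)\ge A_t(x)+A_t(y)$ by splitting into cases according to the parities of $x$ and $y$ and feeding the halved arguments into the recursion above. In each case the two inductive applications — one at threshold $t$, one at threshold $t-1$ — recombine to yield exactly the required bound, the only genuinely ``analytic'' input being the trivial monotonicity $[h(m)\ge t-1]\ge[h(m)\ge t]$. When $x$ and $y$ share the same parity one must choose the two splits asymmetrically (for instance $(x',y'+1)$ and $(x'+1,y')$), so that the leftover terms cancel against this monotonicity instead of being thrown away.

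The hard part will be the disjoint case, the shifted inequality $A_{t+1}(x+y)\ge A_{t+1}(x)+A_t(y)$ for $x\ge y$. Here the hypothesis $x\ge y$ is essential (the inequality is false without it), yet it interacts badly with halving: $x\ge y$ need not survive as $x'\ge y'$ under every parity split, and, crucially, $x\ge y$ does \emph{not} imply $h(x)\ge h(y)$, so the naïve parity recursion leaks exactly the slack one needs. My approach would instead peel off the top bit: letting $2^{n-1}$ be the least power of two with $x+y\le 2^n$, the regime $x\ge 2^{n-1}$ collapses — via the top-bit identity $A_t(2^{n-1}+m)=A_t(2^{n-1})+A_{t-1}(m)$ — precisely to the superadditivity already established. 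What remains is the regime $y\le x<2^{n-1}<x+y$, which descends to a strictly smaller disjoint instance inside the lower subcube. Making this descent close cleanly is the main obstacle, and I expect it to require either a suitably strengthened induction hypothesis or a direct injection of the weight-$\ge t$ elements of $[0:s-1]$ into the weight-$\ge t+1$ elements of $J$ that genuinely exploits $x\ge y$.
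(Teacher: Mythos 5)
There is a genuine gap, and you have honestly flagged it yourself. First, note that the paper offers no proof of this lemma at all: it is imported as a black box from Graham~\cite{G1970}, with the missing case supplied by Jones and Torrence~\cite{JT1999}, so there is no internal argument to compare against. Your reduction of the existence of a special bijection to threshold-counting inequalities is sound: the majorization criterion is correct, the counting function $A_t(m)=|\{i<m : h(i)\ge t\}|$ together with the doubling identities $A_t(2m)=A_t(m)+A_{t-1}(m)$ and $A_t(2m+1)=A_t(m+1)+A_{t-1}(m)$ is the right bookkeeping, and the parity-split induction for superadditivity $A_t(x+y)\ge A_t(x)+A_t(y)$ does close (the asymmetric splits in the equal-parity cases reduce the leftover to $[h(m)\ge t-1]\ge[h(m)\ge t]$, as you say). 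So the overlapping, non-strict half of the lemma is essentially in hand.

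The problem is that the half you leave open --- the shifted inequality $A_{t+1}(x+y)\ge A_{t+1}(x)+A_t(y)$ for $x\ge y$, equivalently the existence of a \emph{strictly} increasing bijection when $I$ and $J$ are disjoint --- is precisely the part of the lemma the paper actually uses: the third assertion of Lemma~\ref{lem:special-bijection}, and hence the inequality $(*)$ in Theorem~\ref{th:main-result} and inequality~(\ref{eq:subcube-recursion}) in Lemma~\ref{lem:recursion}, all require the non-overlapping case. Your diagnosis of why the naive descent fails is accurate ($x\ge y$ is not preserved under halving, and $x\ge y$ does not give $h(x)\ge h(y)$), and the top-bit reduction handles only the regime $x\ge 2^{n-1}$; but the remaining regime $y\le x<2^{n-1}<x+y$ is exactly the ``missing case'' whose resolution occupied~\cite{JT1999}, and announcing that one ``expects'' a strengthened induction hypothesis or an explicit injection is not a proof. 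As it stands the proposal establishes only the non-strict statement and so cannot substitute for the cited lemma; to complete it you would need to actually carry out the descent in the lower subcube (or reproduce the Graham/Jones--Torrence argument), not merely locate where the difficulty sits.
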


\noindent
Here are some easy-to-prove facts regarding special bijections: 

\begin{lemma} \label{lem:special-bijection}
Let $I = [i_0 : i_0+s-1]$ and $J = [j_0 : j_0+s-1]$ with $j_0 > i_0$ 
be sub-intervals of $[0 : 2^n-1]$ which allow for a special bijection. 
Let $g:\nats_0 \ra \reals$ be a non-decreasing function. Then the following 
hold:
\begin{enumerate}
\item
$\sum_{i \in I}g(h(i)) \le \sum_{j \in J}g(h(j))$.
\item
If $I$ and $J$ are non-overlapping and $g$ is strictly increasing, 
then $\sum_{i \in I}g(h(i)) < \sum_{j \in J}g(h(j))$.
\item
For each $0 \le q \le n$, let $h_q:[0:2^n-1] \ra \nats_0$ be the function
given by $h_q(i) = \binom{h(i)}{q}$. Suppose that $I$ and $J$ are
non-overlapping. Then, for all $1 \le q \le n$, 
we have that $\sum_{i \in I}(h_q(i)+h_{q-1}(i)) \le \sum_{j \in J}h_q(j)$.
\end{enumerate}
\end{lemma}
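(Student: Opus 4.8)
The plan is to exploit the defining property of a special bijection $P:I\ra J$, namely $h(i)\le h(P(i))$ for all $i\in I$, and to push this pointwise inequality through the monotone function $g$ and then sum. For Part 1, I would observe that since $P$ is a bijection, the sum $\sum_{j\in J}g(h(j))$ can be reindexed as $\sum_{i\in I}g(h(P(i)))$. Because $g$ is non-decreasing and $h(i)\le h(P(i))$, each term satisfies $g(h(i))\le g(h(P(i)))$, and summing over $i\in I$ gives the claim immediately. This is the mechanical heart of all three parts.

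For Part 2, I would use exactly the same reindexing, but now invoke the strict version of the special-bijection inequality: when $I$ and $J$ are non-overlapping, the definition guarantees $h(i)<h(P(i))$ for every $i\in I$. Since $g$ is strictly increasing, each inequality $g(h(i))<g(h(P(i)))$ is strict, and summing a finite collection of strict inequalities (the index set $I$ is nonempty, being an interval of size $s\ge 1$) yields the strict conclusion $\sum_{i\in I}g(h(i))<\sum_{j\in J}g(h(j))$.

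Part 3 is the one requiring a genuine idea rather than just reindexing, and I expect it to be the main obstacle. The target inequality $\sum_{i\in I}(h_q(i)+h_{q-1}(i))\le\sum_{j\in J}h_q(j)$ cannot follow from a pointwise comparison of $h_q$ alone, because the left-hand side carries an extra $h_{q-1}$ term. The key is the Pascal-type identity $\binom{m+1}{q}=\binom{m}{q}+\binom{m}{q-1}$, which rewrites the left-hand summand as $h_q(i)+h_{q-1}(i)=\binom{h(i)}{q}+\binom{h(i)}{q-1}=\binom{h(i)+1}{q}$. Thus it suffices to show $\sum_{i\in I}\binom{h(i)+1}{q}\le\sum_{j\in J}\binom{h(j)}{q}$. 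Here I would again reindex the right side via $P$ and compare termwise: in the non-overlapping case the strict inequality $h(i)<h(P(i))$ gives $h(i)+1\le h(P(i))$, and since $m\mapsto\binom{m}{q}$ is non-decreasing in $m$, we get $\binom{h(i)+1}{q}\le\binom{h(P(i))}{q}$ for each $i$; summing finishes the proof. The only subtlety to flag is that the strictness $h(i)<h(P(i))$ between integers is precisely what upgrades to the needed $h(i)+1\le h(P(i))$, so the non-overlapping hypothesis is used here in an essential way rather than merely invoked.
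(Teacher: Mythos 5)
Your proposal is correct and follows essentially the same route as the paper's own proof: reindexing the sums via the special bijection $P$ and applying monotonicity of $g$ for the first two parts, and using the Pascal identity $\binom{h(i)}{q}+\binom{h(i)}{q-1}=\binom{h(i)+1}{q}$ together with the integer upgrade $h(i)<h(P(i))\Rightarrow h(i)+1\le h(P(i))$ for the third. No discrepancies to report.
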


\begin{proof}
Let $P:I \ra J$ be a special bijection, i.e., $h(i) \le h(P(i))$
for each $i \in I$ and all of these inequalities are strict in case 
that $I$ and $J$ are non-overlapping. Since $g$ is non-decreasing, 
it follows that
\[
\sum_{i \in I}g(h(i)) \le \sum_{i \in I}g(h(P(i))) = \sum_{j \in J}g(h(j))
\]
and the inequality between the first two terms will be strict 
if $I$ and $J$ are non-overlapping and $g$ is strictly increasing. 
This proves the first two statements of the lemma. The following 
calculation shows the correctness of the third statement:
\begin{eqnarray*}
\sum_{i \in I}(h_q(i) + h_{q-1}(i)) & = & 
\sum_{i \in I}\left(\binom{h(i)}{q} + \binom{h(i)}{q-1}\right)
= \sum_{i \in I}\binom{h(i)+1}{q} \\
& \le & \sum_{i \in I}\binom{h(P(i))}{q} 
= \sum_{j \in J}\binom{h(j)}{q} = \sum_{j \in J}h_q(j)
\enspace .  
\end{eqnarray*}
The inequality in this calculation holds for two reasons.
First, the third statement of the lemma makes the assumption
that $I$ and $J$ are non-overlapping. Hence $h(i)+1 \le h(P(i))$
for each $i \in I$. Second, the function $m \mapsto \binom{m}{q}$
is non-decreasing. These observations complete the proof.
\end{proof}

The following theorem establishes $S^*_{k,n}$ as an optimal
solution for the problem $P_q(k,n)$. The proof that we present
below essentially coincides with the easy part of the proof 
from~\cite{BR1990}. The hard part of that proof will be simply
replaced by a cross-reference to Lemmas~\ref{lem:graham} 
and~\ref{lem:special-bijection}.

\begin{theorem}[\cite{BR1990,BL1990}] \label{th:main-result}
For any choice of $n \ge 1$, $1 \le k \le 2^n$ and $0 \le q \le n$, 
let $P_q(k,n)$ be the problem of finding a set $S \seq B_n$ of size $k$
that contains the largest possible  number of $q$-dimensional subcubes. 
For this problem, the set $S^*_{k,n} = \{\bin(0),\bin(1),\ldots,\bin(k-1)\}$ 
with $m_q(S^*_{k,n}) = \sum_{j=0}^{k-1}h_q(j)$ is an optimal solution. 
\end{theorem}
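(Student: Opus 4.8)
The plan is to induct on $n$, using a recursive decomposition of the subcube count along the top coordinate $r=n-1$ and discharging the single nontrivial inequality by a cross-reference to Lemmas~\ref{lem:graham} and~\ref{lem:special-bijection}. First I would dispose of the trivial regimes exactly as in the Example ($q=0$, $k<2^q$, $k\in\{1,2^n\}$, and $q=n$), which also covers the base case $n=1$; so from now on I may assume $1\le q\le n-1$ and $2^q\le k<2^n$. The inductive hypothesis is that at level $n-1$ the set $S^*_{k',n-1}$ is optimal for every size $k'$ and every dimension, i.e.\ $m_q(k',n-1)=\sum_{j=0}^{k'-1}h_q(j)$ and likewise with $q$ replaced by $q-1$.

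For the inductive step I would fix a size-$k$ set $S\seq B_n$, decompose it along $r=n-1$ into $S_0=S(r,0)$ and $S_1=S(r,1)$, and identify each half with a subset of $B_{n-1}$ by dropping coordinate $r$. The key first step is to establish the identity
\[
m_q(S)=m_q(S_0)+m_q(S_1)+m_{q-1}(S_0\cap S_1)\enspace .
\]
This comes from classifying each $q$-dimensional subcube $C\seq S$ according to the status of coordinate $r$: if $r$ is fixed to $0$ then $C$ is a $q$-subcube contained in $S_0$; if fixed to $1$ then it lies in $S_1$; and if $r$ is free then $C$ is a $(q-1)$-subcube of $B_{n-1}$ both of whose copies are present, i.e.\ one contained in $S_0\cap S_1$.

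Writing $a=|S_0|$ and $b=|S_1|$ with $a+b=k$ and, without loss of generality, $a\ge b$, the inductive hypothesis bounds the three terms by $\sum_{j=0}^{a-1}h_q(j)$, $\sum_{j=0}^{b-1}h_q(j)$, and $\sum_{j=0}^{c-1}h_{q-1}(j)$ respectively, where $c=|S_0\cap S_1|\le b$. Since the last sum is non-decreasing in its upper limit, I may replace $c$ by $b$, obtaining
\[
m_q(S)\le \sum_{j=0}^{a-1}h_q(j)+\sum_{j=0}^{b-1}\bigl(h_q(j)+h_{q-1}(j)\bigr)\enspace .
\]
Splitting the target value $m_q(S^*_{k,n})=\sum_{j=0}^{k-1}h_q(j)$ at $a$ and cancelling the common block $\sum_{j=0}^{a-1}h_q(j)$, the whole theorem reduces to the single inequality
\[
\sum_{j=0}^{b-1}\bigl(h_q(j)+h_{q-1}(j)\bigr)\le \sum_{j=a}^{a+b-1}h_q(j)\enspace .
\]

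This last inequality is precisely where the hard part of the argument from~\cite{BR1990} is outsourced. Setting $I=[0:b-1]$ and $J=[a:a+b-1]$, both of size $b$, the assumption $a\ge b$ forces $I$ and $J$ to be non-overlapping, and $i_0=0$, so Lemma~\ref{lem:graham} supplies a special bijection $I\to J$; the third part of Lemma~\ref{lem:special-bijection} then delivers the displayed inequality verbatim. I expect the main obstacle to be exactly this step—establishing a weight-dominating bijection between an initial interval (augmented by the $h_{q-1}$ correction that Pascal's identity converts into a $+1$ shift of the Hamming weight) and a shifted interval—which is why encapsulating it in Graham's lemma is what makes the proof short. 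I would finish by recalling that $S^*_{k,n}$ attains $\sum_{j=0}^{k-1}h_q(j)$, so the upper bound is tight and $S^*_{k,n}$ is optimal.
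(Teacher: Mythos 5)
Your proof is correct and follows essentially the same route as the paper: a three-term decomposition of $m_q(S)$ along a coordinate, inductive bounds on the resulting parts, and the one nontrivial inequality $\sum_{i=0}^{b-1}(h_q(i)+h_{q-1}(i)) \le \sum_{j=a}^{a+b-1}h_q(j)$ discharged by Lemma~\ref{lem:graham} together with the third part of Lemma~\ref{lem:special-bijection}. The only (harmless) differences are that you induct on $n$ rather than on $k$ --- so you may always split along $r=n-1$ and need not select a coordinate on which $S$ splits nontrivially --- and that you state the decomposition as an exact identity involving $m_{q-1}(S_0\cap S_1)$ before relaxing $|S_0\cap S_1|$ to $|S_1|$, whereas the paper writes the inequality with $m_{q-1}(S(r,1))$ directly.
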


\begin{proof} 
The proof proceeds by induction on $k$. As the result is trivial
for $k=1$, one may assume that $k\ge2$. Let $S \seq B_n$ be any set 
consisting of $k$ vertices in $B_n$. Choose an index $r \in [0:n-1]$ 
such that the set $S$ has non-empty intersections with both of the 
subcubes $B_n(r,0)$ and $B_n(r,1)$ so that the sets $S(r,0)$ 
and $S(r,1)$ are both non-empty. Set $k_0 := |S(r,0)|$ 
and $k_1 = |S(r,1)|$.  For reasons of symmetry, one may assume 
that $k_1 \le k_0$. With these notations, the following hold:
\begin{eqnarray*}
m_q(S) & \le & 
m_q(S(r,0)) + m_q(S(r,1)) + m_{q-1}(S(r,1)) \\
& \le & 
\sum_{j=0}^{k_0-1}h_q(j) + \sum_{i=0}^{k_1-1}(h_q(i) + h_{q-1}(i)) \\
& = &
\sum_{j=0}^{k-1}h_q(j) - \sum_{j=k_0}^{k-1}h_q(j) + 
\sum_{i=0}^{k_1-1}(h_q(i) + h_{q-1}(i)) \\
& \stackrel{(*)}{\le} & \sum_{j=0}^{k-1}h_q(j) 
= m_q(S^*_{k,n}) \enspace .
\end{eqnarray*}
The first inequality holds because each $q$-dimensional subcube
that is contained in $S$ is either contained in $S(r,0)$, or contained 
in $S(r,1)$, or composed of a $(q-1)$-dimensional subcube in $S(r,1)$ 
and a corresponding $(q-1)$-dimensional subcube in $S(r,0)$. The second 
inequality results from an application of the inductive hypothesis. 
The final inequality (marked $(*)$) is the non-trivial one. It is 
proven in~\cite{BR1990} by a series of technical results. But note 
that this inequality is immediate from Lemma~\ref{lem:graham} and the 
third assertion in Lemma~\ref{lem:special-bijection}. The latter 
assertion applies because the intervals $I = [0:k_1-1]$ and $J = [k_0:k-1]$ 
are non-overlapping. Hence the proof of Theorem~\ref{th:main-result}
is complete.
\end{proof}

\noindent
For $r \in [n]$ and $b \in \{0,1\}$ such that $1 \le |S(r,b)| \le |S(r,1-b)|$,
we refer to
\begin{equation} \label{eq:decomposition}
m_q(S) \le m_q(S(r,1-b)) + m_q(S(r,b)) + m_{q-1}(S(r,b))
\end{equation}
as the \emph{three-term decomposition of $m_q(S)$ along dimension $r$}. 
The decomposition is said to be \emph{exact} if~(\ref{eq:decomposition}) 
holds with equality.

\section{A Recursion Involving a Max-Expression} 
\label{sec:subcubes-recursion}

Here comes one more application of Lemmas~\ref{lem:graham}
and~\ref{lem:special-bijection}:

\begin{lemma} \label{lem:recursion}
For $1 \le k \le 2^n$ and $1 \le q \le n$, consider the following 
inductive definition of a function $F_q(k)$:
\begin{eqnarray*}
F_q(1) & = & 0 \enspace . \\
F_q(k) & = & \max_{1 \le k' \le k/2}\left(F_q(k') + F_q(k-k') + F_{q-1}(k')\right)
\enspace .
\end{eqnarray*}
Then 
\begin{equation} \label{eq:recursion}
F_q(k) = \sum_{i=0}^{k-1}h_q(i) \enspace . 
\end{equation}
\end{lemma}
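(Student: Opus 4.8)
The plan is to prove identity~(\ref{eq:recursion}) by strong induction on $k$, treating all values of $q \ge 0$ simultaneously; here the case $q=0$ is the boundary condition $F_0(k) = \sum_{i=0}^{k-1}h_0(i) = k$ on which the recursion for $F_1$ implicitly relies. Write $G_q(k) := \sum_{i=0}^{k-1}h_q(i)$ for the claimed value. The base case $k=1$ holds because $F_q(1) = 0 = \binom{0}{q} = G_q(1)$ for every $q \ge 1$. For the inductive step with $k \ge 2$ I would establish the two inequalities $F_q(k)\ge G_q(k)$ and $F_q(k)\le G_q(k)$ separately, in each case using the inductive hypothesis $F_p(\ell)=G_p(\ell)$, which is available for every $\ell<k$ and every $p \ge 0$ (all the arguments $k'$ and $k-k'$ that occur are strictly smaller than $k$ for $k\ge2$).

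The common engine is an exact three-term decomposition of $G_q(k)$, obtained by splitting $\{0,1,\ldots,k-1\}$ into its even and odd members. Since $h(2j)=h(j)$ and $h(2j+1)=h(j)+1$, and since $\binom{h(j)+1}{q}=\binom{h(j)}{q}+\binom{h(j)}{q-1}$ by Pascal's rule, a direct count shows that, for $k':=\lfloor k/2\rfloor$,
\[
G_q(k) = G_q(k-k') + G_q(k') + G_{q-1}(k') \enspace ,
\]
where the even members account for $G_q(k-k')$ and the odd members for $G_q(k')+G_{q-1}(k')$. For the lower bound I would feed the admissible choice $k'=\lfloor k/2\rfloor$ (which satisfies $1\le k'\le k/2$ for $k\ge2$) into the recursion; the inductive hypothesis turns the three $F$-terms into the corresponding $G$-terms, and the displayed identity shows that this single choice already attains $G_q(k)$. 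Hence $F_q(k)\ge G_q(k)$.

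For the upper bound I must show that $F_q(k)$ cannot exceed $G_q(k)$, i.e.\ that $G_q(k')+G_q(k-k')+G_{q-1}(k')\le G_q(k)$ for every $k'$ with $1\le k'\le k/2$. Subtracting $G_q(k-k')$ and rewriting the remaining terms as sums over $I:=[0:k'-1]$ and $J:=[k-k':k-1]$, this is precisely the inequality $\sum_{i\in I}(h_q(i)+h_{q-1}(i))\le\sum_{j\in J}h_q(j)$ of the third assertion in Lemma~\ref{lem:special-bijection}. The hypotheses of that lemma hold for a good reason: the restriction $k'\le k/2$ is exactly what forces $I$ and $J$ (each of size $k'$) to be non-overlapping, while $i_0=0$ lets Lemma~\ref{lem:graham} supply the required special bijection $I\to J$. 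Maximising over $k'$ then gives $F_q(k)\le G_q(k)$, and the two bounds together close the induction.

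The step I expect to be the crux is the alignment of the two directions: seeing that the split $k'=\lfloor k/2\rfloor$ makes the three-term decomposition \emph{exact} (driving the lower bound), while the very same constraint $k'\le k/2$ is what guarantees the non-overlapping condition needed to invoke Lemmas~\ref{lem:graham} and~\ref{lem:special-bijection} for the upper bound. Once this is recognised, the only genuinely combinatorial inequality is handed off to Graham's lemma, exactly as in the proof of Theorem~\ref{th:main-result}; everything else reduces to the Pascal-rule bookkeeping above.
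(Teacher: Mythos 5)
Your proof is correct and follows essentially the same route as the paper: induction on $k$, with the base case and the upper bound identical --- the latter handed off to Graham's lemma and the third assertion of Lemma~\ref{lem:special-bijection} applied to the non-overlapping intervals $[0:k'-1]$ and $[k-k':k-1]$. The only difference is the witness for the lower bound: you use the even/odd split $k'=\lfloor k/2\rfloor$ (via $h(2j)=h(j)$, $h(2j+1)=h(j)+1$), whereas the paper splits off the top bit with $k'=k-2^r$ for $2^r<k\le 2^{r+1}$ (via $h(2^r+i)=1+h(i)$); both are exact hypercubic partitions and the Pascal-rule bookkeeping is equally routine either way.
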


\begin{proof}
We first observe that an inequality of the form $i < 2^\ell$ implies
that $h(2^\ell + i) = 1+h(i)$ and therefore
\begin{equation} \label{eq:pascal}
h_q(2^\ell+i) = \binom{h(2^\ell+i)}{q} = \binom{1+h(i)}{q} =
\binom{h(i)}{q} + \binom{h(i)}{q-1} = h_q(i) + h_{q-1}(i) \enspace.
\end{equation}
The proof of equation~(\ref{eq:recursion}) will proceed by induction 
over $k$. Clearly $F_q(1) = 0 = \binom{0}{q} = h_q(0)$. 
Hence~(\ref{eq:recursion}) holds for $k=1$.
For $k\ge2$, choose $r$ such that $2^r < k \le 2^{r+1}$.
The inequality $F_q(k) \ge \sum_{i=0}^{k-1}h_q(i)$
can be obtained as follows:
\begin{eqnarray*}
F_q(k) & \ge & F_q(k-2^r) + F_q(2^r) + F_{q-1}(k-2^r) \\
& = &  
\sum_{i=0}^{k-2^r-1}(h_q(i) + h_{q-1}(i)) + \sum_{i=0}^{2^r-1}h_q(i) \\
& = &
\sum_{i=2^r}^{k -1}h_q(i) + \sum_{i=0}^{2^r-1}h_q(i) = 
\sum_{i=0}^{k-1}h_q(i) \enspace .
\end{eqnarray*}
The first equation in this calculation is true by the inductive hypothesis.
Furthermore, since $k \le 2^{r+1}$, we have that $k-2^r-1 < 2^r$.
Hence equation~(\ref{eq:pascal}) applies and we obtain the second equation
in the above calculation. \\
Choose now $1 \le k' \le k/2$ 
such that $F_q(k) = F_q(k') + F_q(k-k') + F_{q-1}(k')$.
We still have to show that $F_q(k) \le \sum_{i=0}^{k-1}h_q(i)$.
By another application of the inductive hypothesis, we see that
it is sufficient to show that
\begin{equation} \label{eq:subcube-recursion}
\sum_{i=0}^{k'-1}(h_q(i)+h_{q-1}(i)) \le 
\sum_{j=0}^{k-1}h_q(j) - \sum_{j=0}^{k-k'-1}h_q(j) = 
\sum_{j=k-k'}^{k-1}h_q(j) \enspace .
\end{equation}
Thanks to $k' \le k/2$, the intervals $I := [0: k'-1]$
and $J := [k-k' : k-1]$ are non-overlapping. It follows from
Lemma~\ref{lem:special-bijection} that $(I,J)$ admits a 
special bijection. An inspection of the third statement in 
Lemma~\ref{lem:graham} reveals that~(\ref{eq:subcube-recursion}) 
is valid.
\end{proof}

Comparing the recursive definition of $F_q(k)$ in
Lemma~\ref{lem:recursion} with the three-term decomposition of $m_q(S)$,
it becomes evident that $F_q(k) \ge m_q(S)$ holds for all sets $S \seq B_n$
of size $k$. Since $F_q(k) = \sum_{j=0}^{k-1}h_q(j) = m_q(S^*_{k,n})$,
Lemma~\ref{lem:recursion} gives us another option for proving 
the optimality of $S^*_{k,n}$. But here we are more interested in
using optimal solutions for problem $P_q(k,n)$ in order to find 
maximizers $1 \le  k' \le k/2$ of $F_q(k') + F_q(k-k') + F_{q-1}(k')$.
An inspection of Lemma~\ref{lem:recursion} reveals that 
a set $S \seq B_n$ of size $k$ satisfies the optimality
condition $m_q(S) = F_q(k)$ only if the following hold for each $r \in [n]$ 
such that $S(r,0) , S(r,1) \neq \eset$:
\begin{enumerate}
\item
The three-term decomposition of $m_q(S)$ along dimension $r$ is exact.
\item
The number $k' = \min\{|S(r,0)| , |S(r,1)|\}$ is a maximizer
of $F_q(k') + F_q(k-k') + F_{q-1}(k')$ subject to $1 \le k' \le k/2$.
\end{enumerate}

As in~\cite{A2013}, we say that $(k-k_1,k_1)$ with $1 \le k_1 \le k/2$ 
is a \emph{hypercubic partition of $k$} if there exists a bit position $r$
such that $k_1 = |S^*_{k,n}(r,1)|$. In other words:
$k_1$ of the vertices $\bin(0),\bin(1),\ldots,\bin(k-1)$ 
belong to the subcube $B_n(r,1)$ and the remaining $k-k_1$ of them
belong to the subcube $B_n(r,0)$. Since $S^*_{k,n}$ 
contains $F_q(k)$  many $q$-dimensional subcubes,
we obtain the following result:
 
\begin{corollary}
Let $F_q$ be the function from Lemma~\ref{lem:recursion}. 
Let $(k-k_1,k_1)$ with $1 \le k_1 \le k/2$ be a hypercubic partition of $k$. 
Then $k' = k_1$ is a maximizer of $F_q(k') + F_q(k-k') + F_{q-1}(k')$
subject to $1 \le k' \le k/2$,
i.e., $F_q(k) = F_q(k-k_1) + F_q(k_1) + F_{q-1}(k_1)$. 
\end{corollary}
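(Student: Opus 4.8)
The plan is to apply the two-item optimality condition recorded immediately before the corollary to the single, canonical set $S^*_{k,n}$. The key preliminary is to check that $S^*_{k,n}$ actually attains $m_q(S) = F_q(k)$. This follows by combining two earlier results: by Theorem~\ref{th:main-result}, $S^*_{k,n}$ is an optimal solution for $P_q(k,n)$ with $m_q(S^*_{k,n}) = \sum_{i=0}^{k-1} h_q(i)$, while by Lemma~\ref{lem:recursion} we have $F_q(k) = \sum_{i=0}^{k-1} h_q(i)$. Hence $m_q(S^*_{k,n}) = F_q(k)$, so $S^*_{k,n}$ satisfies the optimality condition, and item~2 of that condition becomes available for every dimension $r$ along which $S^*_{k,n}$ splits nontrivially.

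First I would fix a bit position $r$ witnessing the hypercubic partition, so that $k_1 = |S^*_{k,n}(r,1)|$ and consequently $k - k_1 = |S^*_{k,n}(r,0)|$. Because $1 \le k_1 \le k/2$, both parts $S^*_{k,n}(r,0)$ and $S^*_{k,n}(r,1)$ are non-empty (note $k - k_1 \ge k/2 \ge 1$), so $r$ is a dimension of the kind to which the optimality condition applies. Moreover $k_1 \le k/2 \le k - k_1$ yields $\min\{|S^*_{k,n}(r,0)|, |S^*_{k,n}(r,1)|\} = k_1$.

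Then I would invoke item~2 directly: since $m_q(S^*_{k,n}) = F_q(k)$ and the chosen $r$ splits $S^*_{k,n}$ nontrivially, the value $k' = \min\{|S^*_{k,n}(r,0)|, |S^*_{k,n}(r,1)|\} = k_1$ is a maximizer of $F_q(k') + F_q(k-k') + F_{q-1}(k')$ over the range $1 \le k' \le k/2$. Unwinding the meaning of ``maximizer'' against the defining recursion of $F_q$ in Lemma~\ref{lem:recursion} gives $F_q(k) = F_q(k-k_1) + F_q(k_1) + F_{q-1}(k_1)$, which is exactly the assertion.

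I do not expect a genuine obstacle: the corollary is, in effect, the optimality-condition observation read off for the specific optimal set $S^*_{k,n}$. The only points needing care are the bookkeeping that forces the minimum to equal $k_1$ rather than $k - k_1$ (this is where $k_1 \le k/2$ is used), and making explicit---via Theorem~\ref{th:main-result} together with Lemma~\ref{lem:recursion}---that $S^*_{k,n}$ truly meets $m_q(S) = F_q(k)$ so that item~2 may be applied at all.
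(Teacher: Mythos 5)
Your proposal is correct and follows exactly the route the paper takes: it verifies $m_q(S^*_{k,n}) = F_q(k)$ by combining Theorem~\ref{th:main-result} with Lemma~\ref{lem:recursion}, and then reads off item~2 of the optimality condition at the bit position $r$ witnessing the hypercubic partition. You merely spell out the bookkeeping (non-emptiness of both parts and the identification of the minimum with $k_1$) that the paper leaves implicit.
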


For instance, $(\lceil k/2 \rceil , \lfloor k/2 \rfloor)$ is
a hypercubic partition of $k$ as witnesses by $j=0$ (the position 
of the least significant bit). If $r$ satisfies $2^r < k \le 2^{r+1}$,
then $(2^r , k-2^r)$ is a hypercubic partition too. This is witnessed
by choosing $j=r$ (the position of the most significant bit). 

For the special case $q=1$, it is shown in~\cite{A2013} that $k' = k_1$
is a maximizer of $F_1(k') + F_1(k-k') + k'$ if and only if $(k-k_1,k_1)$
is a hypercubic partition of $k$. The ``only-if'' direction is not
generally true for $q > 1$.


\end{document}